\newcommand{\abs}[1]{\vert #1 \vert}
\newcommand{\norm}[1]{\left\Vert #1 \right\Vert}
\newcommand{\bignorm}[1]{\bigl\Vert #1 \bigr\Vert}
\newcommand{\C}{\mathbb{C}}
\newcommand{\R}{\mathbb{R}}
\newcommand{\angles}[1]{\langle #1 \rangle}
\DeclareMathOperator{\im}{Im}
\DeclareMathOperator{\re}{Re}
\newtheorem*{PWtheorem}{Paley-Wiener Theorem}
\newtheorem{theorem}{Theorem}
\newtheorem{lemma}{Lemma}
\newtheorem{corollary}{Corollary}
\theoremstyle{definition}
\theoremstyle{remark}
\title[DKG spatial analyticity]{On the radius of spatial analyticity for the 1d Dirac-Klein-Gordon equations}
\author[S.~Selberg]{Sigmund Selberg}
\author[A.~Tesfahun]{Achenef Tesfahun}
\address{Department of Mathematics, University of Bergen, PO Box 7803, N-5020 Bergen, Norway}
\email{sigmund.selberg@math.uib.no}
\address{Universit\"{a}t Bielefeld, Fakult\"{a}t f\"{u}r Mathematik, Postfach 10 01 31, D-33501 Bielefeld, Germany}
\email{achenef@math.uni-bielefeld.de}
\subjclass[2000]{35Q40; 35L70}
\keywords{Dirac-Klein-Gordon equations, global well-posedness, spatial analyticity, Gevrey space, null forms}
\begin{document}

\begin{abstract} 
We study the well-posedness of the Dirac-Klein-Gordon system in one space dimension with initial data that have an analytic extension to a strip around the real axis. It is proved that the radius of analyticity $\sigma(t)$ of the solutions at time $t$ cannot decay faster than $1/t^4$ as $\abs{t} \to \infty$.
\end{abstract}

\maketitle


\section{Introduction}\label{Intro}

Consider the Dirac-Klein-Gordon equations (DKG) on $\R^{1+1}$,
\begin{equation}
\label{DKG}
\left\{
\begin{aligned}
  \left( -i \gamma^0 \partial_t - i \gamma^1 \partial_x + M \right) \psi &= \phi \psi,
  \\
  \left( \partial_t^2 - \partial_x^2 + m^2 \right) \phi &= \psi^* \gamma^0 \psi,
\end{aligned}
\right.
\qquad (t,x \in \R)
\end{equation}
with initial condition
\begin{equation}\label{Data}
  \psi(0,x) = \psi_0(x),
  \quad
  \phi(0,x) = \phi_0(x), \quad \partial_t \phi(0,x) = \phi_1(0,x).
\end{equation}
Here the unknowns are $\phi \colon \R^{1+1} \to \R$ and $\psi \colon \R^{1+1} \to \C^2$, the latter regarded as a column vector with conjugate transpose $\psi^*$. The masses $M,m \ge 0$ are given constants. The $2 \times 2$ Dirac matrices $\gamma^0,\gamma^1$ should satisfy $\gamma^0\gamma^1 + \gamma^1\gamma^0 = 0$, $(\gamma^0)^2 = I$, $(\gamma^1)^2 = -I$, $(\gamma^0)^* = \gamma^0$ and $(\gamma^1)^* = - \gamma^1$; we will work with the representation
\[
  \gamma^0 = \left( \begin{matrix}
      0 & 1 \\
      1 & 0 \\
  \end{matrix} \right),
  \quad
  \gamma^1 = \left( \begin{matrix}
      0 & -1 \\
      1 & 0 \\
  \end{matrix} \right).
\]

The well-posedness of this Cauchy problem with data in the family of Sobolev spaces $H^s = (1-\partial_x^2)^{-s/2}L^2(\R)$, $s \in \R$, has been intensively studied; see \cite{Chadam:1973, Bournaveas:2000, Bachelot:2006, Bournaveas:2006, Pecher:2006, Machihara:2007, Selberg:2008, Selberg:2007, Pecher:2008, Tesfahun:2009, Machihara:2010, Selberg:2010, Candy:2012}. Local well-posedness holds for data
\begin{equation}\label{SobolevData}
  (\psi_0,\phi_0,\phi_1) \in H^s(\R;\C^2) \times H^r(\R;\R) \times H^{r-1}(\R;\R)
\end{equation}
with $s > -1/2$ and $\abs{s} \le r \le s+1$; see \cite{Machihara:2010}, where it is also proved that this is the optimal result, in the sense that for other $(r,s)$ one either has ill-posedness or the solution map (if it exists) is not regular. Moreover, when $s \ge 0$ there is conservation of charge,
\[
  \norm{\psi(t)}_{L^2} = \norm{\psi_0}_{L^2},
\]
implying that the solutions extend globally when $0 \le s \le r \le s+1$, and by propagation of higher regularity the global solution is $C^\infty$ if the data are $C^\infty$.

While the well-posedness in Sobolev spaces is well-understood, much less is known concerning spatial analyticity of the solutions to the above Cauchy problem, and this is what motivates the present paper.

On the one hand, local propagation of analyticity for nonlinear hyperbolic systems has been studied by Alinhac and M\'etivier \cite{Alinhac1984} and Jannelli \cite{Jannelli:1986}, and in particular this general theory implies that if the data \eqref{SobolevData} (with $s,r$ sufficiently large) are analytic on the real line, then the the same is true of the solution $(\psi,\phi,\partial_t \phi)(t)$ to \eqref{DKG} for all times $t$. The local theory does not give any information about the radius of analyticity, however.

On the other hand, one can consider the situation where a uniform radius of analyticity on the real line is assumed for the initial data, so there is a holomorphic extension to a strip $\{ x+iy \colon \abs{y} < \sigma_0\}$ for some $\sigma_0 > 0$. One may then ask whether this property persists for all later times $t$, but with a possibly smaller and shrinking radius of analyticity $\sigma(t) > 0$. This type of question was introduced in an abstract setting of nonlinear evolutionary PDE by Kato and Masuda \cite{Kato:1986}, who showed in particular that for the Korteweg-de Vries equation (KdV) the radius of analyticity $\sigma(t)$ can decay to zero at most at a super-exponential rate. A similar rate of decay for semilinear symmetric hyperbolic systems has been proved recently by Cappiello, D'Ancona and Nicola \cite{Cappiello:2014}. An algebraic rate of decay for KdV was shown by Bona and Kalisch \cite{Bona2005}. Panizzi \cite{Panizzi2012} has obtained an algebraic rate for nonlinear Klein-Gordon equations. In this paper our aim is to obtain an algebraic rate for the DKG system.

We use the following spaces of Gevrey type. For $\sigma \ge 0$ and $s \in \R$, let $G^{\sigma,s}$ be the Banach space with norm
\[
  \norm{f}_{G^{\sigma,s}} = \bignorm{e^{\sigma\abs{\xi}} \angles{\xi}^s \widehat f(\xi)}_{L^2_\xi},
\]
where $\widehat f(\xi) = \int_{\R} e^{- i x\xi} f(x) \, dx$ is the Fourier transform and $\angles{\xi} = (1+\abs{\xi}^2)^{1/2}$. So for $\sigma > 0$ we have $G^{\sigma,s} = \{ f \in L^2 \colon e^{\sigma\abs{\cdot}} \angles{\cdot}^s \widehat f \in L^2 \}$ and for $\sigma = 0$ we recover the Sobolev space $H^s = G^{0,s}$ with norm $\norm{f}_{H^s} = \bignorm{\angles{\xi}^s \widehat f(\xi)}_{L^2_\xi}$.

Observe the embeddings
\begin{alignat}{2}
  \label{Inclusion1}
  G^{\sigma,s} &\subset H^{s'}& \quad &\text{for $0 < \sigma$ and $s,s' \in \R$},
  \\
  \label{Inclusion2}
  G^{\sigma,s} &\subset G^{\sigma',s'}& \quad &\text{for $0 < \sigma' < \sigma$ and $s,s' \in \R$}.
\end{alignat}
Every function in $G^{\sigma,s}$ with $\sigma > 0$ has an analytic extension to the strip
\[
  S_\sigma = \left\{ x+iy \colon x,y \in \R, \;\abs{y} < \sigma \right\}.
\]

\begin{PWtheorem}
Let $\sigma > 0$, $s \in \R$. The following are equivalent:
\begin{enumerate}
\item $f \in G^{\sigma,s}$.
\item $f$ is the restriction to the real line of a function $F$ which is holomorphic in the strip $S_\sigma$ and satisfies $\sup_{\abs{y} < \sigma} \norm{F(x+iy)}_{H^s_x} < \infty$.
\end{enumerate}
\end{PWtheorem}

The proof given for $s=0$ in \cite[p.~209]{Katznelson1976} applies also for $s \in \R$ with some obvious modifications. 

\section{Main result}

Consider the Cauchy problem \eqref{DKG}, \eqref{Data} with data
\begin{equation}\label{GevreyData}
  (\psi_0,\phi_0,\phi_1) \in G^{\sigma_0,s}(\R;\C^2) \times G^{\sigma_0,r}(\R;\R) \times G^{\sigma_0,r-1}(\R;\R),
\end{equation}
where $\sigma_0 > 0$ and $(r,s) \in \R^2$. By the embedding \eqref{Inclusion1} and the existing well-posedness theory we know that this problem has a unique, smooth solution for all time, regardless of the values of $r$ and $s$. Our main result gives an algebraic lower bound on the radius of analyticity $\sigma(t)$ of the solution as the time $t$ tends to infinity.

\begin{theorem}\label{thm2} Let $\sigma_0 > 0$ and $(r,s) \in \R^2$. Then for any data \eqref{GevreyData} the solution of the Cauchy problem \eqref{DKG}, \eqref{Data} satisfies
\[
  (\psi,\phi,\partial_t \phi)(t) \in G^{\sigma(t),s} \times G^{\sigma(t),r} \times G^{\sigma(t),r-1}
  \quad \text{for all $t \in \R$},
\]
where the radius of analyticity $\sigma(t) > 0$ satisfies an asymptotic lower bound
\[
  \sigma(t) \ge \frac{c}{t^4}
  \qquad \text{as $\abs{t} \to \infty$},
\]
with a constant $c > 0$ depending on $m$, $M$, $\sigma_0$, $r$, $s$, and the norm of the data \eqref{GevreyData}. 
\end{theorem}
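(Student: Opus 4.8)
The plan is to first reduce to the case where the data are merely in the Sobolev spaces of the critical-type exponents $s = 0$, $r = 1/2$ (or some convenient choice in the admissible range), using the embedding \eqref{Inclusion2}: since $G^{\sigma_0,s} \hookrightarrow G^{\sigma_0,s'}$ for all $s'$, it suffices to prove the theorem for one pair $(r,s)$ for which a good local well-posedness theory with null-form estimates is available. Next I would run the standard machinery for lower bounds on the radius of analyticity, which has three ingredients. First, an \emph{approximate conservation law}: for each $\sigma > 0$ one controls the $G^{\sigma,0}$ norm of $\psi$ (and the $G^{\sigma,1/2} \times G^{\sigma,-1/2}$ norm of $(\phi,\partial_t\phi)$) on a short time interval, and shows that over a fixed local-existence time step $\delta$ the ``analytic charge'' $\norm{\psi(t)}_{G^{\sigma,0}}^2$ grows by at most $C\sigma$ times a power of the data norms. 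The mechanism, as in Bona--Kalisch and Panizzi, is that $e^{\sigma|\xi|} = e^{\sigma|\xi|}$ satisfies a subadditivity defect: $e^{\sigma|\xi|} \le e^{\sigma|\xi_1|}e^{\sigma|\xi_2|}$ with $\xi = \xi_1+\xi_2$, so the Gevrey weight behaves almost like the $L^2$ structure except for an error that is $O(\sigma)$ after using $e^{x}-1 \le x e^x$ on the ``defect'' $e^{\sigma|\xi|} - e^{\sigma(|\xi_1|+|\xi_2|)} \ge$ something controlled by $\min(|\xi_1|,|\xi_2|,|\xi|)$, which is exactly where the null structure of $\psi^*\gamma^0\psi$ and of $\phi\psi$ is used to absorb the gained derivative.

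The key technical step is therefore to establish the \textbf{nonlinear Gevrey estimates with an explicit $\sigma$-gain}. Concretely, working in Bourgain-type spaces $X^{s,b}$ adapted to the two operators (the half-wave operators for the Dirac part after diagonalizing $-i\gamma^0\partial_t - i\gamma^1\partial_x + M$, and the Klein--Gordon operator), one proves trilinear estimates of the form
\[
  \norm{\text{Duhamel term}}_{G^{\sigma,\cdot} X^{\cdot,b}} \lesssim \norm{\psi}_{G^{\sigma,0}X^{0,b}}^2 \norm{\phi}_{G^{\sigma,1/2}X^{1/2,b}} + \sigma \cdot (\text{polynomial in the same norms}),
\]
i.e. the same estimates as in the Sobolev theory (Machihara--Nakanishi--Tsugawa, Selberg--Tesfahun) but with the Gevrey weight carried through and the commutator between $e^{\sigma|\xi|}$ and the nonlinearity producing a manifestly $O(\sigma)$ remainder, provided one spends the derivative gained from the null forms $\Innerprod{\gamma^0 \psi}{\psi}$-type cancellations. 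I expect \emph{this} to be the main obstacle: one must verify that the null-form gain in DKG (which in 1d comes from the algebraic identity relating $\psi^*\gamma^0\psi$ to products of the ``good'' and ``bad'' spinor components travelling in opposite characteristic directions) is robust enough to dominate the Gevrey defect uniformly on the short time interval, and that no derivative loss creeps back in at the endpoint regularities.

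With the approximate conservation law in hand — say $\norm{\psi(t)}_{G^{\sigma,0}}^2 \le \norm{\psi_0}_{G^{\sigma,0}}^2 + C\sigma A(t)$ on $[0,\delta]$, where $A$ collects powers of the Klein--Gordon energy, which itself satisfies a similar almost-conservation — the final step is the \textbf{iteration / bootstrap}. Starting from radius $\sigma_0$ and iterating over $\lfloor t/\delta\rfloor$ steps, the analytic norm at time $t$ stays bounded as long as the accumulated error $\sim C\sigma \cdot (t/\delta) \cdot A$ does not exceed the initial norm; since $\delta$ itself depends on the current norm only through a fixed power, choosing $\sigma = \sigma(t) \sim c\, t^{-4}$ makes the bootstrap close, the exponent $4$ arising from the combination of (i) the linear-in-$t$ number of steps, (ii) the power of the norm that enters $\delta^{-1}$ in the local theory at these low regularities, and (iii) the fact that the Klein--Gordon energy that feeds $A(t)$ is itself only almost-conserved and can grow polynomially. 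Tracking these exponents carefully — as opposed to running the routine induction — is the only remaining bookkeeping, and it yields $\sigma(t) \gtrsim c/t^4$ as $|t| \to \infty$, with $c$ depending on $m, M, \sigma_0, r, s$ and the data norm through \eqref{Inclusion2} and the reduction above.
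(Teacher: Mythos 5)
Your overall strategy coincides with the paper's: reduce to a single convenient $(r,s)$ via the embedding $G^{\sigma,s}\subset G^{\sigma,s'}$, diagonalize the Dirac operator into the two characteristic components, establish a local result on a time step $\delta$ whose size is tracked in terms of the data norms, prove an almost conservation law for the Gevrey-weighted charge with an $O(\sigma)$ defect coming from the elementary bound $\abs{e^{x}-1}\le \abs{x}e^{\abs{x}}$ applied to the weight, and then iterate and count exponents. That is exactly what the paper does, and the mechanism you describe for the $\sigma$-gain is precisely the one in Section~\ref{SecondStepSection}, where the time derivative of $\mathfrak M_{\sigma,\epsilon}$ reduces to a commutator term $(e^{\epsilon\sigma D_x}\phi-\phi)\Psi_\pm$ after the usual cancellation in the proof of charge conservation.

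The main technical divergence is your proposed use of Gevrey-modified Bourgain spaces $X^{s,b}$ and trilinear estimates with an explicit $O(\sigma)$ remainder. The paper deliberately avoids Bourgain spaces and instead works in $C_t G^{\sigma,\cdot}$ via the Duhamel energy inequality \eqref{EnergyInequality}, the Sobolev product bound \eqref{Sobolev}, and the $L^2_{t,x}$ null-form estimate for products of transversal waves (Lemma~\ref{NullLemma}, a Bournaveas/Klainerman--Machedon-type bound); this is lighter machinery and gives very clean, explicit powers of $\delta$. Moreover, the paper's almost-conservation step is not phrased as a nonlinear estimate with a $\sigma$-error attached: it is an exact identity obtained by applying $e^{\epsilon\sigma D_x}$ to the split Dirac system and noting that, since $\phi-M$ is real, the main contribution to $\frac{d}{dt}\mathfrak M_{\sigma,\epsilon}$ vanishes identically, leaving \emph{only} the commutator term. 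Your formulation, where the trilinear estimate equals the Sobolev one plus an $O(\sigma)$ remainder, risks double-counting and is harder to close; the identity-based version is the cleaner route. Finally, your exponent accounting is informal and one item is stated incorrectly: the number of steps is not linear in $t$. Since the Klein--Gordon component $\mathfrak N_\sigma$ is not conserved at all but grows like $\delta^{1/2}\mathfrak M_\sigma$ per step, one must allow $\mathfrak N_\sigma(T)\sim T^2$, which forces $\delta\sim T^{-2}$ and hence $n=T/\delta\sim T^3$ steps; the per-step gain in $\mathfrak M_\sigma$ is $\sim\sigma\delta^{1/2}\mathfrak N_\sigma\sim\sigma T$, and $T^3\cdot\sigma T\lesssim 1$ gives $\sigma\gtrsim T^{-4}$. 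Your final exponent is right, but items (i)--(iii) as written do not combine to $4$ without this correction.
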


Observe that by the embedding \eqref{Inclusion2}, it suffices to prove Theorem \ref{thm2} for a single choice of $(r,s) \in \R^2$, and we choose $(r,s) = (1,0)$; global well-posedness in the Sobolev data space \eqref{SobolevData} at this regularity was first proved by Bournaveas \cite{Bournaveas:2000}. By time reversal it suffices to prove the theorem for $t > 0$, which we assume henceforth.

The first step in the proof is to show that in a short time interval $0 \le t \le \delta$, where $\delta > 0$ depends on the norm of the initial data, the radius of analyticity remains constant. This is proved by a contraction argument involving energy estimates, Sobolev embedding and a null form estimate which is somewhat similar to the one proved by Bournaveas in \cite{Bournaveas:2000}. Here we take care to optimize the dependence of $\delta$ on the data norms, since the local result will be iterated.

The next step is to improve the control of the growth of the solution in the time interval $[0,\delta]$, measured in the data norm \eqref{GevreyData}. To achieve this we show that, although the conservation of charge does not hold exactly in the Gevrey space $G^{\sigma,0}$, it does hold in an approximate sense. Iterating the local result we then obtain Theorem \ref{thm2}.

Analogous results for the KdV equation were proved by Bona, Gruji\'c and Kalisch in \cite{Bona2005}, but the method used there is quite different: They estimate in Gevrey-modified Bourgain spaces directly on any large time interval $[0,T]$ while we iterate a precise local result. While we have been able to adapt the method from \cite{Bona2005} to the DKG equations, this only gave us a rate $1/t^{8+}$, while our method gives $1/t^4$. The reason for this is twofold: (i) the contraction norms involve integration in time, which is a disadvantage when working on large time intervals, and (ii) it is not clear how to get any kind of approximate charge conservation when working directly on large time intervals. Our short-time iterative approach is more inspired by the ideas developed by Colliander, Holmer and Tzirakis in \cite{Colliander2008} in the context of global well-posedness in the standard Sobolev spaces\footnote{In particular, our argument also provides an alternative proof of the result of Bournaveas \cite{Bournaveas:2000} concerning global well-posedness for data $(\psi_0,\phi_0,\phi_1) \in L^2 \times H^1 \times L^2$, by setting $\sigma=0$ throughout.}, and the idea of almost conservation laws introduced in \cite{Colliander2002}.

Although our method gives a significantly better result for DKG than the method from \cite{Bona2005}, we do not know whether the result in Theorem \ref{thm2} is optimal. We do expect that the ideas introduced here can be applied also to other equations than DKG to obtain algebraic lower bounds on the radius of analyticity.

We now turn to the proofs. Leaving the case $m=0$ until the very end of the paper, we will assume $m > 0$ for now. By a rescaling we may assume $m=1$.

\section{Reformulation of the system}

It will be convenient to rewrite the DKG system as follows. Write $\psi= (\psi_+,\psi_-)^T$ and $\phi = \phi_+ + \phi_-$ with $\phi_\pm = \frac12 \left( \phi \pm i \angles{D_x}^{-1} \partial_t \phi \right)$, where $D_x = -i\partial_x$, hence $D_x$ and $\angles{D_x}$ are Fourier multipliers with symbols $\xi$ and $\angles{\xi} = (1+\abs{\xi}^2)^{1/2}$ respectively. For later use we note that $e^{\sigma \abs{D_x}}$ has symbol $e^{\sigma\abs{\xi}}$, while $e^{\pm \sigma D_x}$ has symbol $e^{\pm \sigma\xi}$.

Writing also $D_t = -i\partial_t$, the Cauchy problem \eqref{DKG}, \eqref{GevreyData} with $m=1$ is then equivalent to
\begin{equation}\label{DKGsplit}
\left\{
\begin{alignedat}{2}
  \left( D_t + D_x \right) \psi_+ &= -M\psi_- + \phi \psi_-,&
  \quad
  \psi_+(0) &= f_+ \in G^{\sigma_0,s},
  \\
  \left( D_t - D_x \right) \psi_- &= -M\psi_+ + \phi \psi_+,&
  \quad
  \psi_-(0) &= f_- \in G^{\sigma_0,s},
  \\
  \left( D_t + \angles{D_x} \right) \phi_+ &= - \angles{D_x}^{-1} \re \left( \overline{\psi_+} \psi_- \right),&
  \quad
  \phi_+(0) &= g_+ \in G^{\sigma_0, r},
  \\
  \left( D_t - \angles{D_x} \right) \phi_- &= + \angles{D_x}^{-1} \re \left( \overline{\psi_+} \psi_- \right),&
  \quad
  \phi_-(0) &= g_- \in G^{\sigma_0, r},
\end{alignedat}
\right.
\end{equation}
where $\psi_0 = (f_+,f_-)^T$ and $g_\pm = \frac12\left( \phi_0 \pm i \angles{D_x}^{-1} \phi_1 \right)$. We remark that $\overline{\phi_+} = \phi_-$, since $\phi$ is real-valued.

\section{Energy estimate}

Each line in \eqref{DKGsplit} is of the schematic form
\[
  \left( D_t + h(D_x) \right) u = F(t,x), \quad u(0,x) = f(x),
\]
with $h(\xi) = \pm \xi$ or $\pm\angles{\xi}$. Then for sufficiently regular $f$ and $F$ one has, by Duhamel's formula,
\[
  u(t) = W_{h(\xi)}(t) f + i \int_0^t W_{h(\xi)}(t-s) F(s) \, ds,
\]
where $W_{h(\xi)}(t) = e^{-ith(D_x)}$ is the solution group; it is the Fourier multiplier with symbol $e^{-ith(\xi)}$. From this one obtains immediately the following energy inequality, for any $\sigma \ge 0$ and $a \in \R$:
\begin{equation}\label{EnergyInequality}
  \norm{u(t)}_{G^{\sigma,a}} \le \norm{f}_{G^{\sigma,a}} + \int_0^t \norm{F(s)}_{G^{\sigma,a}} \, ds \qquad (t \ge 0).
\end{equation}

\section{Bilinear estimates}

We shall need the following null form estimate.

\begin{lemma}\label{NullLemma}
The solutions $u$ and $v$ of the Cauchy problems
\begin{alignat*}{2}
  (D_t+D_x) u &= F(t,x),& \qquad u(0,x) &= f(x),
  \\
  (D_t-D_x) v &= G(t,x),& \qquad v(0,x) &= g(x),
\end{alignat*}
satisfy
\[
  \norm{uv}_{L^2([0,T] \times \R)}
  \le C
  \left( \norm{f}_{L^2} + \int_0^T \norm{F(t)}_{L^2} \, dt \right)
  \left( \norm{g}_{L^2} + \int_0^T \norm{G(t)}_{L^2} \, dt \right)
\]
for all $T > 0$. Moreover, the same holds for $\overline u v$.
\end{lemma}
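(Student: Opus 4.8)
The plan is to exploit the fact that $u$ travels along the null direction $x+t = \text{const}$ (it is "left-moving" in a sense compatible with $D_t + D_x$) while $v$ travels along $x - t = \text{const}$, so the product $uv$ is controlled by a transversality/change-of-variables argument — this is the classical null form mechanism for the $1$d wave equation. Concretely, first I would reduce via Duhamel to the case of the homogeneous solutions plus an integrated contribution: by Minkowski's inequality in time applied to the Duhamel formula, it suffices to prove the bilinear bound $\norm{(e^{-itD_x}f)(e^{itD_x}g)}_{L^2([0,T]\times\R)} \le C\norm{f}_{L^2}\norm{g}_{L^2}$ for the free solutions, since the contributions of the forcing terms $F,G$ are then handled by the same estimate integrated against $s$ and $s'$ (treating $F(s)$, $G(s')$ as initial data at shifted times). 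Here $e^{-itD_x}f(x) = f(x-t)$ and $e^{itD_x}g(x) = g(x+t)$ are literally translates.

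For the free estimate, I would write $u(t,x) = f(x-t)$ and $v(t,x) = g(x+t)$ explicitly (the transport operators $D_t \pm D_x$ have constant coefficients, so the solution group acts by translation), and then compute
\[
  \norm{uv}_{L^2([0,T]\times\R)}^2 = \int_0^T \int_\R \abs{f(x-t)}^2 \abs{g(x+t)}^2 \d x \d t.
\]
Performing the change of variables $(\alpha,\beta) = (x-t, x+t)$, which has Jacobian bounded below, one gets $\int\int \abs{f(\alpha)}^2\abs{g(\beta)}^2$ over a region in the $(\alpha,\beta)$-plane, which is bounded by $\tfrac12\norm{f}_{L^2}^2\norm{g}_{L^2}^2$; crucially the bound is independent of $T$. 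This gives $C$ independent of $T$, as required.

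The only real subtlety — and the step I would be most careful about — is the bookkeeping in reducing the inhomogeneous problem to the homogeneous one: one must verify that $u(t) = (e^{-itD_x}f)(x) + i\int_0^t (e^{-i(t-s)D_x}F(s))(x)\,ds$ and then, after inserting this into $uv$ and using Minkowski in $t$ together with the triangle inequality for the two $\int_0^t(\cdots)\d s$, that each resulting piece is a free-type bilinear term to which the homogeneous estimate applies with the same constant $C$; the double time integral $\int_0^T\!\!\int_0^T$ then factors as a product of two single integrals, reproducing the claimed right-hand side. Finally, the assertion for $\overline u v$ is immediate since $\overline{u(t,x)} = \overline{f(x-t)}$ still satisfies the same pointwise modulus bound $\abs{\overline u} = \abs{u}$, so the identical computation applies verbatim.
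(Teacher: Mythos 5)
Your proof is correct and takes essentially the same route the paper indicates: reduce to the homogeneous case $F=G=0$ via Duhamel and Minkowski, and then prove the free estimate by passing to characteristic coordinates $(\alpha,\beta)=(x-t,x+t)$, which gives a $T$-independent constant; your observation that $|\overline u v| = |uv|$ pointwise handles the $\overline u v$ case even more directly than the paper's remark that $\overline u$ solves the same transport equation with data $\overline f$.
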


\begin{proof}
By Duhamel's formula, as in the proof of Theorem 2.2 in \cite{Klainerman1993}, one can reduce to the case $F=G=0$, and this case is easily proved by changing to characteristic coordinates or by using Plancherel's theorem as in \cite[Lemma 2]{Selberg:2008}. Replacing $u$ by its complex conjugate $\overline u$ does not affect the argument, since $u(t)=e^{-itD_x}f$ implies $\overline u = e^{-itD_x} \overline f$, as one can check on the Fourier transform side.
\end{proof}

\begin{corollary}\label{NullCorollary}
Let $\sigma \ge 0$. With notation as in Lemma \ref{NullLemma} we have
\begin{multline*}
  \norm{uv}_{L^2_t([0,T];G^{\sigma,0})}
  \le C
  \left( \norm{f}_{G^{\sigma,0}} + \int_0^T \norm{F(t)}_{G^{\sigma,0}} \, dt \right)
  \\
  \times\left( \norm{g}_{G^{\sigma,0}} + \int_0^T \norm{G(t)}_{G^{\sigma,0}} \, dt \right)
\end{multline*}
for all $T > 0$. Moreover, the same holds for $\overline u v$.
\end{corollary}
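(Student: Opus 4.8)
The plan is to deduce Corollary \ref{NullCorollary} from Lemma \ref{NullLemma} by the standard Fourier-space device for passing Gevrey weights through bilinear estimates. Recall that the $G^{\sigma,0}$ norm is $\norm{f}_{G^{\sigma,0}} = \norm{e^{\sigma\abs{D_x}} f}_{L^2}$, and that the operator $e^{\sigma\abs{D_x}}$ is a Fourier multiplier whose symbol $e^{\sigma\abs{\xi}}$ is \emph{subadditive}: $e^{\sigma\abs{\xi}} \le e^{\sigma\abs{\eta}} e^{\sigma\abs{\xi - \eta}}$ for all $\xi,\eta \in \R$, by the triangle inequality $\abs{\xi} \le \abs{\eta} + \abs{\xi-\eta}$. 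This is precisely what lets us estimate the weight of a product by the product of the weights of the factors.

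First I would fix $T > 0$ and, for each fixed time $t$, write the Fourier transform of the product $uv$ as the convolution $\widehat{uv}(\xi) = (2\pi)^{-1} \int \widehat u(\eta)\, \widehat v(\xi-\eta)\, d\eta$ (the exact normalization is irrelevant). Multiplying by $e^{\sigma\abs{\xi}}$ and applying the subadditivity bound pointwise under the integral gives $e^{\sigma\abs{\xi}} \abs{\widehat{uv}(\xi)} \le (2\pi)^{-1} \int \bigl(e^{\sigma\abs{\eta}}\abs{\widehat u(\eta)}\bigr)\bigl(e^{\sigma\abs{\xi-\eta}}\abs{\widehat v(\xi-\eta)}\bigr)\, d\eta$, which says that $e^{\sigma\abs{D_x}}(uv)$ is, up to a nonnegative pointwise bound on the Fourier side, the product $\tilde u \tilde v$ where $\tilde u, \tilde v$ are the functions with $\widehat{\tilde u} = e^{\sigma\abs{\cdot}}\abs{\widehat u}$ and $\widehat{\tilde v} = e^{\sigma\abs{\cdot}}\abs{\widehat v}$. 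Since the $L^2$ norm is insensitive to such moduli on the Fourier side, $\norm{uv}_{G^{\sigma,0}} \le C \norm{\tilde u \tilde v}_{L^2_x}$ at each time $t$; taking the $L^2_t([0,T])$ norm then yields $\norm{uv}_{L^2_t([0,T];G^{\sigma,0})} \le C \norm{\tilde u \tilde v}_{L^2([0,T]\times\R)}$.

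Next I would identify $\tilde u$ and $\tilde v$ as solutions of the transport equations to which Lemma \ref{NullLemma} applies. The key point is that the multiplier $e^{-itD_x}$ commutes with $e^{\sigma\abs{D_x}}$ and with the operation of replacing a Fourier transform by its modulus, since $e^{-it\xi}$ has modulus $1$. Concretely, if $u$ solves $(D_t + D_x)u = F$ with $u(0) = f$, then applying $e^{\sigma\abs{D_x}}$ shows $e^{\sigma\abs{D_x}}u$ solves the same equation with data $e^{\sigma\abs{D_x}}f$ and forcing $e^{\sigma\abs{D_x}}F$; and the Duhamel representation $\widehat{u}(t,\xi) = e^{-it\xi}\widehat{f}(\xi) + i\int_0^t e^{-i(t-s)\xi}\widehat{F}(s,\xi)\,ds$ gives, after multiplying by $e^{\sigma\abs{\xi}}$ and using $\abs{e^{-i(t-s)\xi}} = 1$ together with the triangle inequality in the $s$-integral, the pointwise bound $e^{\sigma\abs{\xi}}\abs{\widehat u(t,\xi)} \le e^{\sigma\abs{\xi}}\abs{\widehat f(\xi)} + \int_0^t e^{\sigma\abs{\xi}}\abs{\widehat F(s,\xi)}\,ds$. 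Thus $\tilde u$ is dominated on the Fourier side by the solution of $(D_t+D_x)\tilde u = \tilde F$, $\tilde u(0) = \tilde f$, with $\widehat{\tilde f} = e^{\sigma\abs{\cdot}}\abs{\widehat f}$ and $\widehat{\tilde F}(s) = e^{\sigma\abs{\cdot}}\abs{\widehat F(s)}$, and similarly $\tilde v$ for the $(D_t - D_x)$ equation. Applying Lemma \ref{NullLemma} to $\tilde u, \tilde v$ and noting $\norm{\tilde f}_{L^2} = \norm{f}_{G^{\sigma,0}}$, $\norm{\tilde F(s)}_{L^2} = \norm{F(s)}_{G^{\sigma,0}}$, etc., gives exactly the claimed inequality. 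The statement for $\overline u v$ follows in the same way, using that $\widehat{\overline u}(\xi) = \overline{\widehat u(-\xi)}$ so $\abs{\widehat{\overline u}(\xi)} = \abs{\widehat u(-\xi)}$ and $e^{\sigma\abs{-\xi}} = e^{\sigma\abs{\xi}}$, so the Gevrey weight is again respected; alternatively one invokes the ``moreover'' clause of Lemma \ref{NullLemma} directly on $\tilde u, \tilde v$.

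I do not anticipate a genuine obstacle here: the only thing to be careful about is the bookkeeping that turns the convolution/Duhamel pointwise inequalities on the Fourier side into honest function-space inequalities without losing constants, i.e. tracking that every step either preserves or can be majorized by a quantity whose $L^2$ norm is unchanged under taking moduli of Fourier transforms. The one mild subtlety worth stating cleanly is the commutation of $e^{\sigma\abs{D_x}}$ with the solution group and with Duhamel's integral, which is immediate on the Fourier transform side but should be written out so the reduction to Lemma \ref{NullLemma} is transparent.
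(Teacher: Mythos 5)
Your approach breaks down at the point where you pass from the convolution inequality to an application of Lemma \ref{NullLemma}. Having replaced $\widehat{u}(t,\xi)$ by $\widehat{\tilde u}(t,\xi) = e^{\sigma\abs{\xi}}\abs{\widehat u(t,\xi)}$, you have discarded the propagator phase $e^{-it\xi}$, and the functions $\tilde u, \tilde v$ no longer solve the transport equations to which the lemma applies. Your claim that ``$\tilde u$ is dominated on the Fourier side by the solution of $(D_t+D_x)w = \tilde F$, $w(0) = \tilde f$'' is not correct: both $\widehat{\tilde u}(t,\xi)$ and $\abs{\widehat w(t,\xi)}$ are bounded by $\widehat{\tilde f}(\xi) + \int_0^t \widehat{\tilde F}(s,\xi)\,ds$, but neither dominates the other (there can be cancellation in the Duhamel integral for $w$ that is absent from $\widehat{\tilde u}$). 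More seriously, even with such a domination the lemma would give control of $\norm{wz}_{L^2}$, not of $\norm{\abs{\widehat w}*\abs{\widehat z}}_{L^2}$, so the argument would still not close. The loss is already visible in the free case $F=G=0$ with $\sigma=0$: there $\widehat{\tilde u}(t,\xi) = \abs{\widehat f(\xi)}$ is time-independent, so $\norm{\tilde u\tilde v}_{L^2([0,T]\times\R)} = T^{1/2}\,\norm{\tilde u\tilde v}_{L^2_x}$, and the right-hand side is in general not finite for $f,g\in L^2$ (e.g.\ if $\abs{\widehat f}*\abs{\widehat g}\notin L^2$), whereas the actual bilinear quantity $\norm{uv}_{L^2([0,T]\times\R)}$ is controlled by the null-form estimate. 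In short, taking Fourier moduli destroys the oscillatory transversal structure ($x\pm t$) that makes Lemma~\ref{NullLemma} true, and the intermediate bound $\norm{uv}_{L^2_t G^{\sigma,0}} \le C\norm{\tilde u\tilde v}_{L^2_{t,x}}$, while a valid inequality, is too lossy to conclude.

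The paper's proof sidesteps this entirely by working with the two \emph{multiplicative} weights $e^{\pm\sigma D_x}$ (symbols $e^{\pm\sigma\xi}$) rather than the merely submultiplicative $e^{\sigma\abs{D_x}}$. Since $e^{\pm\sigma\xi} = e^{\pm\sigma\eta}e^{\pm\sigma(\xi-\eta)}$ exactly, one has the identity $e^{\pm\sigma D_x}(uv) = (e^{\pm\sigma D_x}u)(e^{\pm\sigma D_x}v)$; and $e^{\pm\sigma D_x}$ commutes with $D_t \pm D_x$, so $e^{\pm\sigma D_x}u$ and $e^{\pm\sigma D_x}v$ are genuine solutions of the same transport equations with conjugated data and forcing. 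Lemma \ref{NullLemma} then applies verbatim, and the Gevrey norm is recovered from the elementary equivalence $\norm{h}_{G^{\sigma,0}} \sim \norm{e^{\sigma D_x}h}_{L^2} + \norm{e^{-\sigma D_x}h}_{L^2}$ coming from $e^{\sigma\abs{\xi}} = \max(e^{\sigma\xi}, e^{-\sigma\xi})$. This keeps the phases intact throughout, which is exactly what your modulus-taking step gives up.
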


\begin{proof}
It suffices to prove that $\norm{e^{\pm \sigma D_x}(uv)}_{L^2_t([0,T];L^2)}$ is bounded by the right-hand side. But this follows from Lemma \ref{NullLemma}, since $e^{\pm \sigma D_x} (uv) = (e^{\pm \sigma D_x}u) (e^{\pm \sigma D_x}v)$, as is obvious on the Fourier transform side.
\end{proof}

We will also need the Sobolev product estimate
\begin{equation}\label{Sobolev}
  \norm{fg}_{G^{\sigma,0}} \le C \norm{f}_{G^{\sigma,1}} \norm{g}_{G^{\sigma,0}},
\end{equation}
where $C > 1$ is an absolute constant. For $\sigma=0$ this reduces to H\"older's inequality and the Sobolev embedding $H^1(\R) \subset L^\infty(\R)$, and the case $\sigma > 0$ is then deduced as in the proof of the corollary above.

\section{A local result}\label{LWP}

Here we prove the following local existence result:

\begin{theorem}\label{FirstStep} Let $\sigma_0 > 0$. For any
\[
  (\psi_0,\phi_0,\phi_1) \in X_0 := G^{\sigma_0,0}(\R;\C^2) \times G^{\sigma_0,1}(\R;\R) \times G^{\sigma_0,0}(\R;\R)
\]
there exists a time $\delta > 0$ such that the solution of the Cauchy problem \eqref{DKG}, \eqref{Data} satisfies $(\psi,\phi,\partial_t \phi) \in C\left([0,\delta]; X_0 \right)$. Moreover,
\begin{equation}\label{delta}
  \delta = \frac{c_0}{1 + a_0^2 + b_0},
\end{equation}
where $a_0 = \norm{f_+}_{G^{\sigma_0,0}} + \norm{f_-}_{G^{\sigma_0,0}}$, $b_0 = \norm{g_+}_{G^{\sigma_0,1}} + \norm{g_-}_{G^{\sigma_0,1}}$, and $c_0 > 0$ is a constant depending on the Dirac mass $M$.
\end{theorem}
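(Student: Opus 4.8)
The plan is to set up a standard contraction-mapping argument on the integral (Duhamel) formulation of the split system \eqref{DKGsplit}, working in a space tailored so that the two ingredients we control—energy estimates and the null form estimate—fit together. Specifically, I would look for $(\psi_+,\psi_-) \in C([0,\delta];G^{\sigma_0,0})$ and $(\phi_+,\phi_-) \in C([0,\delta];G^{\sigma_0,1})$, and introduce the iteration norm
\[
  \vertiii{(\psi_\pm,\phi_\pm)} = \sup_{0 \le t \le \delta} \left( \norm{\psi_+(t)}_{G^{\sigma_0,0}} + \norm{\psi_-(t)}_{G^{\sigma_0,0}} + \norm{\phi_+(t)}_{G^{\sigma_0,1}} + \norm{\phi_-(t)}_{G^{\sigma_0,1}} \right),
\]
possibly augmented by the $L^2_t([0,\delta];G^{\sigma_0,0})$ norm of the $\psi_\pm$, which is exactly what Corollary \ref{NullCorollary} produces on its left-hand side. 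The map $\Phi$ sends $(\psi_\pm,\phi_\pm)$ to the Duhamel integrals of the right-hand sides of \eqref{DKGsplit}; a fixed point of $\Phi$ in a suitable ball is the desired solution, and uniqueness and persistence of regularity follow in the usual way.

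Next I would estimate each output component. For the $\psi_+$ equation the forcing is $-M\psi_- + \phi\psi_-$; applying the energy inequality \eqref{EnergyInequality} with $(\sigma,a) = (\sigma_0,0)$ and then the product estimate \eqref{Sobolev} to bound $\norm{\phi\psi_-}_{G^{\sigma_0,0}} \le C\norm{\phi}_{G^{\sigma_0,1}}\norm{\psi_-}_{G^{\sigma_0,0}}$, one gets
\[
  \norm{\psi_+(t)}_{G^{\sigma_0,0}} \le \norm{f_+}_{G^{\sigma_0,0}} + M\delta\,\vertiii{\cdot} + C\delta\,\vertiii{\cdot}^2,
\]
and symmetrically for $\psi_-$. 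For the $\phi_\pm$ equations the forcing is $\mp\angles{D_x}^{-1}\re(\overline{\psi_+}\psi_-)$; the smoothing operator $\angles{D_x}^{-1}$ gains one derivative, so measuring $\phi_\pm$ in $G^{\sigma_0,1}$ costs nothing, and the energy inequality reduces matters to bounding $\int_0^\delta \norm{\overline{\psi_+}\psi_-}_{G^{\sigma_0,0}}\,dt$, which is exactly where the null form estimate of Corollary \ref{NullCorollary} enters (note $\psi_+$ solves a $(D_t+D_x)$ equation and $\psi_-$ a $(D_t-D_x)$ equation, precisely the configuration of Lemma \ref{NullLemma}, and the conjugate $\overline{\psi_+}\psi_-$ is covered). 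This yields a bound of the form $C(a_0 + \delta + \delta\vertiii{\cdot}^2)^2$ for the $\phi$-part, where the inner forcing terms $F,G$ in the corollary are themselves $-M\psi_\mp + \phi\psi_\mp$ and are handled again by \eqref{Sobolev}. Collecting everything and using $a_0,b_0$ as defined, one sees that $\Phi$ maps the ball of radius $R \sim a_0 + b_0 + 1$ into itself and is a contraction there, provided $\delta(1 + R + R^2) \lesssim 1$, i.e. provided $\delta \le c_0/(1 + a_0^2 + b_0)$ with $c_0$ depending only on $M$ (and the absolute constants $C$); the quadratic in $a_0$ versus linear in $b_0$ reflects that $a_0$ appears squared through the $\psi$-bilinear term while $b_0$ appears only linearly.

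The main obstacle, and the point requiring care, is the bookkeeping of how $\delta$ depends on the data norms: a naive application of \eqref{Sobolev} to all nonlinear terms (including $\overline{\psi_+}\psi_-$ in the $\phi$-equation via $H^1 \subset L^\infty$, which would need $\psi$ in $G^{\sigma_0,1}$) would force higher regularity on $\psi_0$ and, more importantly, would give a worse power of $a_0$ in \eqref{delta}. Using the null form estimate instead of the Sobolev product estimate for the quadratic term $\overline{\psi_+}\psi_-$ is what keeps $\psi$ at $L^2$-regularity and what yields the clean form $\delta \sim (1+a_0^2+b_0)^{-1}$; since this local result is to be iterated many times in the proof of Theorem \ref{thm2}, the exact exponents here directly determine the final $1/t^4$ rate, so the estimates must be arranged to extract them optimally rather than merely to close the contraction. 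Finally, translating back from the split variables $(\psi_\pm,\phi_\pm)$ to $(\psi,\phi,\partial_t\phi)$ and checking that membership in $X_0$ is equivalent to the split data lying in the stated Gevrey spaces is routine, using $\phi = \phi_+ + \phi_-$, $\partial_t\phi = i\angles{D_x}(\phi_+ - \phi_-)$, and the boundedness of $\angles{D_x}^{\pm 1}$ between the appropriate spaces.
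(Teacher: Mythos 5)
Your overall route is the same as the paper's: Duhamel reformulation, Picard iteration / contraction mapping, energy inequality for the linear flows, the Sobolev product estimate \eqref{Sobolev} for the $\phi\psi$ terms, and the null form estimate of Corollary \ref{NullCorollary} (with a H\"older in time) for $\overline{\psi_+}\psi_-$, exploiting the $\angles{D_x}^{-1}$ gain in the $\phi$-equation. You also correctly flag why the null form estimate, rather than a Sobolev product bound, is essential to keep $\psi$ at $L^2$-regularity and to get the right powers of $a_0,b_0$.

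There is, however, a genuine gap in the final bookkeeping. With a single combined norm $\vertiii{\cdot}$ and a single ball radius $R \sim a_0 + b_0 + 1$, the self-mapping condition $\delta(1 + R + R^2) \lesssim 1$ gives $\delta \lesssim (a_0+b_0)^{-2} \sim (a_0^2 + b_0^2)^{-1}$, not the claimed $(1 + a_0^2 + b_0)^{-1}$ (take $a_0 = 0$, $b_0$ large: your condition forces $\delta \lesssim b_0^{-2}$, whereas \eqref{delta} allows $\delta \sim b_0^{-1}$). The paper avoids this by tracking the $\psi$-part and the $\phi$-part in \emph{separate} quantities $A_n$ and $B_n$ with \emph{different} target bounds $A_n \le 2a_0$, $B_n \le 2(a_0+b_0)$, and using that the estimate for the $\phi$-iterate involves only $A_n^2$ (not $(A_n + B_n)^2$) on the right, so the constraint from the $\phi$-part is only $\delta^{1/2} a_0 \lesssim 1$, i.e. $\delta a_0^2 \lesssim 1$, while the $\psi$-part gives $\delta (M + a_0 + b_0) \lesssim 1$; together these yield \eqref{delta}. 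You clearly see this asymmetry in words (your last sentence of the paragraph), but the displayed inequality does not encode it, and the step ``$\delta(1+R+R^2) \lesssim 1$, i.e. $\delta \le c_0/(1+a_0^2+b_0)$'' is simply false as written. Since the exact exponents in $\delta$ propagate directly to the $1/t^4$ rate, this needs to be repaired by working with two radii (or two sequences, as in the paper).

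A second, smaller remark: the quantities on the right of Corollary \ref{NullCorollary} are Duhamel-type norms $\norm{f}_{G^{\sigma,0}} + \int_0^\delta \norm{F(t)}_{G^{\sigma,0}}\,dt$, which is exactly why the paper takes $A_n$ to be $\sum_\pm \big( \norm{f_\pm}_{G^{\sigma_0,0}} + \int_0^\delta \norm{(D_t \pm D_x)\psi_\pm^{(n)}(t)}_{G^{\sigma_0,0}}\,dt \big)$ rather than the sup-in-time norm. Your sup-norm can be converted (integrating the forcing picks up an extra $\delta$), but you must then carry those extra $\delta$-factors through carefully so as not to distort the exponents; the paper's choice of $A_n$ sidesteps that issue.
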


\begin{proof} Consider the iterates $\psi_\pm^{(n)}$, $\phi_\pm^{(n)}$ given inductively by
\begin{gather*}
  \psi_\pm^{(0)}(t) = W_{\pm\xi}(t) f_\pm,
  \qquad
  \phi_\pm^{(0)}(t) = W_{\pm\angles{\xi}}(t) g_\pm,
  \\
  \begin{aligned}
  \psi_\pm^{(n+1)}(t) &= \psi_\pm^{(0)}(t)
  + i \int_0^t W_{\pm\xi}(t-s)\left( -M\psi_\mp^{(n)} + \phi^{(n)} \psi_\mp^{(n)} \right)(s) \, ds,
  \\
  \phi_\pm^{(n+1)}(t) &= \phi_\pm^{(0)}(t)
  \mp i \int_0^t  W_{\pm\angles{\xi}}(t-s) \angles{D_x}^{-1}
  \re \left( \overline{\psi_+^{(n)}} \psi_-^{(n)} \right)(s) \, ds
\end{aligned}
\end{gather*}
for $n \in \mathbb N_0$. Here $\phi^{(n)} = \phi_+^{(n)} + \phi_-^{(n)}$. Now set
\begin{align*}
  A_n(\delta) &= \sum_\pm \left( \norm{f_\pm}_{G^{\sigma_0,0}} + \int_0^\delta \norm{(D_t \pm D_x)\psi_\pm^{(n)}(t)}_{G^{\sigma_0,0}} \, dt \right),
  \\
  B_n(\delta) &= \sum_\pm \norm{\phi_\pm^{(n)}}_{L_t^\infty([0,\delta];G^{\sigma_0,1})}.
\end{align*}
We claim that, for $n \in \mathbb N_0$,
\begin{alignat}{2}
  \label{IterationEstimate1}
  A_0(\delta) &= a_0,& \qquad A_{n+1}(\delta) &\le a_0 + C \delta A_n(\delta) \bigl(M + B_n(\delta)\bigr),
  \\
  \label{IterationEstimate2}
  B_0(\delta) &= b_0,& \qquad B_{n+1}(\delta) &\le b_0 + C \delta^{1/2} A_n(\delta)^2,
\end{alignat}
where $C > 1$ is an absolute constant. By the energy inequality \eqref{EnergyInequality}, this reduces to
\begin{align*}
  \int_0^\delta \norm{-M\psi_\mp^{(n)}(t)}_{G^{\sigma_0,0}} \, dt
  &\le
  C\delta M A_n(\delta),
  \\
  \int_0^\delta \norm{\phi^{(n)}(t) \psi_\mp^{(n)}(t)}_{G^{\sigma_0,0}} \, dt
  &\le
  C \delta A_n(\delta) B_n(\delta),
  \\
  \int_0^\delta \norm{\overline{\psi_+^{(n)}(t)} \psi_-^{(n)}(t)}_{G^{\sigma_0,0}} \, dt
  &\le
  C \delta^{1/2} A_n(\delta)^2.
\end{align*}
The first estimate follows from the energy inequality \eqref{EnergyInequality}, the second from  \eqref{Sobolev}, and the third from Corollary \ref{NullCorollary}, after an application of H\"older's inequality in time. 

For convenience we replace $b_0$ in the right-hand side of \eqref{IterationEstimate2} by $a_0+b_0$. Then by induction we get $A_n(\delta) \le 2a_0$ and $B_n(\delta) \le 2(a_0+b_0)$ for all $n$, provided $\delta > 0$ is so small that $C \delta 2a_0 (M+2a_0+2b_0) \le a_0$ and $C \delta^{1/2} (2a_0)^2 \le a_0+b_0$, but the latter we replace by the more restrictive $C \delta^{1/2} 2a_0 2(a_0+b_0) \le a_0+b_0$, hence we get \eqref{delta}.

Applying the same estimates to
\begin{align*}
  \mathfrak A_n(\delta) &= \sum_\pm \int_0^\delta \norm{(D_t \pm D_x)(\psi_\pm^{(n)}-\psi_\pm^{(n-1)})(t)}_{G^{\sigma_0,0}} \, dt,
  \\
  \mathfrak B_n(\delta) &= \sum_\pm \norm{\phi_\pm^{(n)}-\phi_\pm^{(n-1)}}_{L_t^\infty([0,\delta];G^{\sigma_0,1})},
\end{align*}
one finds that
\begin{align}
  \label{IterationEstimate3}
  \mathfrak A_{n+1}(\delta) &\le C \delta \mathfrak A_n(\delta) \bigl(M + B_n(\delta)\bigr)
  + C \delta A_n(\delta) \mathfrak B_n(\delta),
  \\
  \label{IterationEstimate4}
  \mathfrak B_{n+1}(\delta) &\le 2C \delta^{1/2} A_n(\delta)\mathfrak A_n(\delta),
\end{align}
so taking $\delta$ even smaller, but still satisfying \eqref{delta}, then $\mathfrak A_{n+1}(\delta) \le \frac14 \mathfrak A_n(\delta) + \frac14 \mathfrak B_n(\delta)$ and $\mathfrak B_{n+1}(\delta) \le \frac14 \mathfrak A_n(\delta)$,
hence $\mathfrak A_{n+1}(\delta) + \mathfrak B_{n+1}(\delta) \le \frac12 \left( \mathfrak A_n(\delta) + \mathfrak B_n(\delta) \right)$. Therefore the iterates converge, and this concludes the proof of Theorem \ref{FirstStep}.
\end{proof}

\section{Growth estimate and almost conservation of charge}\label{SecondStepSection}

Next we estimate the growth in time of
\begin{align*}
  \mathfrak M_\sigma(t) &= \sum_{\epsilon \in \{-1,+1\}} \left( \norm{e^{\epsilon\sigma D_x}\psi_+(t)}_{L^2}^2 + \norm{e^{\epsilon\sigma D_x}\psi_-(t)}_{L^2}^2 \right),
  \\
  \mathfrak N_\sigma(t) &= \norm{\phi_+(t)}_{G^{\sigma,1}} + \norm{\phi_-(t)}_{G^{\sigma,1}},
\end{align*}
where $\sigma \in (0,\sigma_0]$ is considered a parameter. Note that $\mathfrak M_\sigma(t)$ is comparable to
\[
  \mathfrak M_\sigma'(t) = \norm{\psi_+(t)}_{G^{\sigma,0}}^2 + \norm{\psi_-(t)}_{G^{\sigma,0}}^2.
\]
By the local theory developed so far we know that $\mathfrak M_\sigma(t)$ and $\mathfrak N_\sigma(t)$ remain finite for times $t \in [0,\delta]$, where
\begin{equation}\label{Time}
  \delta = \delta(\sigma) = \frac{c_0}{1+\mathfrak M_\sigma(0) + \mathfrak N_{\sigma}(0)}.
\end{equation}
Moreover, from the proof of Theorem \ref{FirstStep} we know that
\begin{gather}
  \label{IterationBound1}
  \norm{f_\pm}_{G^{\sigma,0}} + \int_0^\delta \norm{(D_t \pm D_x)\psi_\pm(t)}_{G^{\sigma,0}} \, dt
  \le C\mathfrak M_\sigma(0)^{1/2},
  \\
  \label{IterationBound2}
  \norm{\phi_\pm}_{L_t^\infty([0,\delta];G^{\sigma,1})}
  \le C \left( \mathfrak M_\sigma(0)^{1/2} + \mathfrak N_\sigma(0) \right).
\end{gather}

We will prove the following.

\begin{theorem}\label{SecondStep}
With hypotheses as in Theorem \ref{FirstStep}, then for $\sigma \in (0,\sigma_0]$ and $\delta=\delta(\sigma)$ as in \eqref{Time}, we have
\begin{align}
  \label{Mest}
  \sup_{t \in [0,\delta]} \mathfrak M_\sigma(t)
  &\le
  \mathfrak M_\sigma(0) + C\sigma \delta^{1/2} \mathfrak M_\sigma(0) \left( \mathfrak M_\sigma(0)^{1/2} + \mathfrak N_\sigma(0) \right),
  \\
  \label{Nest}
  \sup_{t \in [0,\delta]} \mathfrak N_\sigma(t)
  &\le
  \mathfrak N_\sigma(0) + C \delta^{1/2} \mathfrak M_\sigma(0),
\end{align}
where $C > 1$ is an absolute constant.
\end{theorem}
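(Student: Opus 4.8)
The plan is to prove the two estimates separately, working with the auxiliary functions $\Psi_\pm^\epsilon := e^{\epsilon\sigma D_x}\psi_\pm$ and $\Phi^\epsilon := e^{\epsilon\sigma D_x}\phi$ for $\epsilon \in \{-1,+1\}$, and exploiting throughout that the Fourier multiplier $e^{\epsilon\sigma D_x}$ commutes with pointwise products, i.e.\ $e^{\epsilon\sigma D_x}(fg) = (e^{\epsilon\sigma D_x}f)(e^{\epsilon\sigma D_x}g)$, the symbol $e^{\epsilon\sigma\xi}$ being multiplicative (this is the property already used in Corollary \ref{NullCorollary}; it is essential here that we work with $e^{\pm\sigma D_x}$ rather than $e^{\sigma\abs{D_x}}$).

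Estimate \eqref{Nest} is the easier one. Applying the energy inequality \eqref{EnergyInequality} with $a=1$ to the $\phi_\pm$ equations in \eqref{DKGsplit}, and using $\norm{\angles{D_x}^{-1}F}_{G^{\sigma,1}} = \norm{F}_{G^{\sigma,0}}$, we get for $t \in [0,\delta]$
\[
  \norm{\phi_\pm(t)}_{G^{\sigma,1}} \le \norm{g_\pm}_{G^{\sigma,1}} + \int_0^\delta \norm{(\overline{\psi_+}\psi_-)(s)}_{G^{\sigma,0}} \, ds .
\]
By Cauchy--Schwarz in $s$, then Corollary \ref{NullCorollary} (with $u=\psi_+$, $v=\psi_-$, in the $\overline u v$ form), and finally \eqref{IterationBound1}, the integral is at most $\delta^{1/2}\cdot C\mathfrak M_\sigma(0)^{1/2}\cdot C\mathfrak M_\sigma(0)^{1/2}$; summing over the two signs gives \eqref{Nest}.

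For \eqref{Mest} --- the approximate conservation of charge --- apply $e^{\epsilon\sigma D_x}$ to the first two lines of \eqref{DKGsplit}; by the multiplicative property, $\Psi_\pm^\epsilon$ solve $(D_t+D_x)\Psi_+^\epsilon = -M\Psi_-^\epsilon + \Phi^\epsilon\Psi_-^\epsilon$ and $(D_t-D_x)\Psi_-^\epsilon = -M\Psi_+^\epsilon + \Phi^\epsilon\Psi_+^\epsilon$. Since $D_x$ is self-adjoint on $L^2$, a standard energy computation (legitimate because the Gevrey solution is $C^\infty$) yields
\[
  \frac{d}{dt}\left( \norm{\Psi_+^\epsilon(t)}_{L^2}^2 + \norm{\Psi_-^\epsilon(t)}_{L^2}^2 \right) = 4 \int \im(\Phi^\epsilon)\, \re\bigl( \Psi_-^\epsilon \overline{\Psi_+^\epsilon} \bigr) \, dx ,
\]
the mass terms dropping out because $\langle \Psi_-^\epsilon,\Psi_+^\epsilon\rangle + \langle\Psi_+^\epsilon,\Psi_-^\epsilon\rangle \in \R$, and the cubic terms combining via $\Psi_-^\epsilon\overline{\Psi_+^\epsilon} + \Psi_+^\epsilon\overline{\Psi_-^\epsilon} = 2\re(\Psi_-^\epsilon\overline{\Psi_+^\epsilon})$. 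For $\sigma = 0$ we have $\Phi^\epsilon = \phi \in \R$, so the right-hand side vanishes --- exact charge conservation; for $\sigma > 0$ the error is governed by $\im(\Phi^\epsilon)$, whose Fourier transform has modulus $\abs{\sinh(\sigma\xi)}\,\abs{\widehat\phi(\xi)} \le \sigma\angles{\xi}e^{\sigma\abs{\xi}}\abs{\widehat\phi(\xi)}$ (using $\abs{\sinh(\sigma\xi)} \le \sigma\abs{\xi}\cosh(\sigma\xi)$ and the reality of $\phi$), so that $\norm{\im(\Phi^\epsilon)(t)}_{L^2} \le C\sigma\norm{\phi(t)}_{G^{\sigma,1}} \le C\sigma\bigl(\mathfrak M_\sigma(0)^{1/2} + \mathfrak N_\sigma(0)\bigr)$ by \eqref{IterationBound2}. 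Integrating the identity over $[0,t]$ with $t \le \delta$, and using Cauchy--Schwarz in $x$ and then in $s$, the cubic term is bounded by $\norm{\im(\Phi^\epsilon)}_{L^\infty_t L^2_x}\,\delta^{1/2}\,\norm{\overline{\Psi_+^\epsilon}\Psi_-^\epsilon}_{L^2_t([0,\delta];L^2_x)}$. Because $\Psi_\pm^\epsilon$ solve the displayed $(D_t\pm D_x)$-equations, Lemma \ref{NullLemma} (in the $\overline u v$ form) bounds the last factor by $C$ times the product of $\norm{\Psi_+^\epsilon(0)}_{L^2} + \int_0^\delta \norm{(D_t+D_x)\Psi_+^\epsilon}_{L^2}\,dt$ and its $(D_t-D_x)$ analogue; since $\norm{e^{\epsilon\sigma D_x}g}_{L^2} \le C\norm{g}_{G^{\sigma,0}}$, each of these is $\le C\bigl(\norm{f_\pm}_{G^{\sigma,0}} + \int_0^\delta \norm{(D_t\pm D_x)\psi_\pm}_{G^{\sigma,0}}\,dt\bigr) \le C\mathfrak M_\sigma(0)^{1/2}$ by \eqref{IterationBound1}. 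Collecting these bounds and summing over $\epsilon$ gives \eqref{Mest}.

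The crux --- and the only nonroutine step --- is the algebraic reduction producing the energy identity above: the cancellation of the mass terms together with the collapse of the cubic contribution into the single term carrying an $\im(\Phi^\epsilon)$ factor. This is what allows the error to be pulled out with a genuine power of $\sigma$ (through $\sinh(\sigma\xi) = O(\sigma)$ for fixed $\xi$), and it relies essentially on $\phi$ being real-valued and on $e^{\epsilon\sigma D_x}$ commuting with products; the analogous computation would fail for $e^{\sigma\abs{D_x}}$, which is the reason for the choice of $e^{\pm\sigma D_x}$ in the definition of $\mathfrak M_\sigma$. It is also worth noting that the error term still possesses the null structure $(D_t+D_x)\times(D_t-D_x)$, so Lemma \ref{NullLemma} applies and supplies the $\delta^{1/2}$ that makes the subsequent iteration converge.
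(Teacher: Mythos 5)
Your proposal is correct (apart from a harmless sign typo: the energy identity should read $-4\int\im(\Phi^\epsilon)\re(\Psi_-^\epsilon\overline{\Psi_+^\epsilon})\,dx$, not $+4$) and it follows essentially the same route as the paper: the paper splits $\Phi^\epsilon = \phi + (e^{\epsilon\sigma D_x}-1)\phi$ and lets the first (real) piece cancel in the energy identity while you keep $\Phi^\epsilon$ intact and extract $\im(\Phi^\epsilon)$, but these are algebraically identical, with your $\sinh(\sigma\xi)$ playing the role of the paper's $e^{\epsilon\sigma\xi}-1$ (both yielding the crucial $\sigma|\xi|e^{\sigma|\xi|}$ bound), and the remaining steps — H\"older in time, Lemma \ref{NullLemma} applied to $\overline{\Psi_+^\epsilon}\Psi_-^\epsilon$, and the bounds \eqref{IterationBound1}--\eqref{IterationBound2} — coincide. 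One small caveat: your justification for discarding the boundary terms ("legitimate because the Gevrey solution is $C^\infty$") is weaker than the paper's, which runs the computation at a slightly smaller radius $\sigma'<\sigma$ so that Riemann--Lebesgue provides the decay, and then passes to the limit by monotone convergence.
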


It suffices to prove these estimates at the endpoint $t=\delta$. 

The estimate for $\mathfrak N_\sigma$ follows from the energy inequality and Corollary \ref{NullCorollary} as in the proof of \eqref{IterationEstimate2}, and taking into account \eqref{IterationBound1}. 

To estimate $\mathfrak M_\sigma$ we proceed as in the proof of conservation of charge. Let $\epsilon \in \{ -1, +1\}$ and write
\[
  \mathfrak M_{\sigma,\epsilon}(t) = \norm{e^{\epsilon\sigma D_x}\psi_+(t)}_{L^2}^2 + \norm{e^{\epsilon\sigma D_x}\psi_-(t)}_{L^2}^2,
\]
so that $\mathfrak M_\sigma = \mathfrak M_{\sigma,-1} + \mathfrak M_{\sigma,+1}$. Applying $e^{\epsilon\sigma D_x}$ to each side of the Dirac equations in \eqref{DKGsplit} gives\footnote{Observe that $e^{\epsilon\sigma D_x}(fg) = (e^{\epsilon\sigma D_x}f)(e^{\epsilon\sigma D_x}g)$, as is obvious on the Fourier transform side. This is the reason for using the norm $\mathfrak M_\sigma$ instead of the simpler $\mathfrak M_\sigma'$.}
\[
  (D_t \pm D_x)\Psi_\pm = (\phi-M) \Psi_\mp + F_\mp,
\]
where
\[
  \Psi_\pm = e^{\epsilon\sigma D_x}\psi_\pm,
  \qquad
  F_\pm = (e^{\epsilon\sigma D_x}\phi-\phi) \Psi_\pm.
\]
Since $\phi$ and $M$ are real-valued we then get
\begin{align*}
  \frac{d}{dt} \mathfrak M_{\sigma,\epsilon}(t)
  &= 2\re \int \partial_t\Psi_+(t,x) \overline{\Psi_+(t,x)} + \partial_t\Psi_-(t,x) \overline{\Psi_-(t,x)} \, dx
  \\
  &= \int \partial_x\left(-\Psi_+(t,x) \overline{\Psi_+(t,x)} + \Psi_-(t,x) \overline{\Psi_-(t,x)}\right) \, dx
  \\
  &\qquad+ 2\re \int i(\phi(t,x)-M) \left( \Psi_-(t,x) \overline{\Psi_+(t,x)} + \Psi_+(t,x) \overline{\Psi_-(t,x)} \right) \, dx
  \\
  &\qquad+ 2\re \int iF_-(t,x) \overline{\Psi_+(t,x)} + iF_+(t,x) \overline{\Psi_-(t,x)} \, dx
  \\
  &= - 2\im \int F_-(t,x) \overline{\Psi_+(t,x)} + F_+(t,x) \overline{\Psi_-(t,x)} \, dx.
\end{align*}
In the last step we used the fact that we may assume that $\Psi_\pm(t,x)$ decays to zero at spatial infinity. Indeed, if we want to prove the estimates in Theorem \ref{SecondStep} for a given $\sigma$, then by the monotone convergence theorem it suffices to prove it for all $\sigma' < \sigma$, and then we get the decay by the Riemann-Lebesgue lemma.

Integration in time yields
\begin{align*}
  \mathfrak M_{\sigma,\epsilon}(\delta)
  &=
  \mathfrak M_{\sigma,\epsilon}(0) - 2\im \int_0^\delta \int \left( F_-(t,x)\overline{\Psi_+(t,x)}
  + F_+(t,x)\overline{\Psi_-(t,x)} \right) \, dx \, dt
  \\
  &\le \mathfrak M_{\sigma,\epsilon}(0)
  + 4 \delta^{1/2} \norm{(e^{\epsilon\sigma D_x}-1)\phi}_{L_t^\infty([0,\delta];L^2)}
  \norm{\overline{\Psi_+}\Psi_-}_{L^2([0,\delta] \times \R)}
  \\
  &\le \mathfrak M_{\sigma,\epsilon}(0)
  + 4 \delta^{1/2} \sigma \norm{\abs{D_x}\phi}_{L_t^\infty([0,\delta];G^{\sigma,0})} \norm{\overline{\Psi_+}\Psi_-}_{L^2([0,\delta] \times \R)}
\end{align*}
where we applied H\"older's inequality and the symbol estimate
\[
  \abs{e^{\epsilon\sigma \xi} - 1} \le \sigma\abs{\xi} e^{\sigma\abs{\xi}}.
\]
Applying Lemma \ref{NullLemma} and taking into account the bounds \eqref{IterationBound1} and \eqref{IterationBound2} we then obtain \eqref{Mest}, and this concludes the proof of Theorem \ref{SecondStep}.

\section{Conclusion of the proof}

All the tools required to complete the proof of the main result, Theorem \ref{thm2}, are now at hand.

We are given $\sigma_0 > 0$ and data such that $\mathfrak M_{\sigma_0}(0)$ and $\mathfrak N_{\sigma_0}(0)$ are finite. Our task is to prove that for all large $T$, the solution has a positive radius of analyticity
\begin{equation}\label{Goal}
  \sigma(t) \ge \frac{c}{T^4} \quad \text{for all $t \in [0,T]$,}
\end{equation}
where $c > 0$ is a constant depending on the Dirac mass $M$ and the data norms $\mathfrak M_{\sigma_0}(0)$ and $\mathfrak N_{\sigma_0}(0)$.

Since we are interested in the behaviour as $T \to \infty$, we may certainly assume
\begin{equation}\label{Tlarge}
  \mathfrak M_{\sigma_0}(0)+\mathfrak N_{\sigma_0}(0) \le T^2.
\end{equation}
Now fix such a $T$ and let $\sigma \in (0,\sigma_0]$ be a parameter to be chosen; then \eqref{Tlarge} of course holds also with $\sigma_0$ replaced by $\sigma$.

Let $A \gg 1$ denote a constant which may depend on $M$, $\mathfrak M_{\sigma_0}(0)$ and $\mathfrak N_{\sigma_0}(0)$; the choice of $A$ will be made explicit below. 

As long as $\mathfrak M_\sigma(t) \le 2\mathfrak M_\sigma(0)$ and $\mathfrak N_\sigma(t) \le 2AT^2$ we can now apply the local results, Theorems \ref{FirstStep} and \ref{SecondStep}, with a uniform time step
\begin{equation}\label{TimeStep}
  \delta = \frac{c_0}{A T^2},
\end{equation}
where $c_0 > 0$ depends only on $M$. We can choose $c_0$ so that $T/\delta$ is an integer. Proceeding inductively we cover intervals $[(n-1)\delta,n\delta]$ for $n=1,2,\dots$, obtaining
\begin{align*}
  \mathfrak M_\sigma(n\delta)
  &\le
  \mathfrak M_\sigma(0) + nC\sigma \delta^{1/2} (2\mathfrak M_\sigma(0)) (4AT^2),
  \\
  \mathfrak N_\sigma(n\delta)
  &\le
  \mathfrak N_\sigma(0) + nC\delta^{1/2} 2\mathfrak M_\sigma(0),
\end{align*}
and in order to reach the prescribed time $T=n\delta$ we require that
\begin{align}
  \label{final1}
  nC\sigma \delta^{1/2} (2\mathfrak M_\sigma(0)) (4AT^2) &\le \mathfrak M_\sigma(0),
  \\
  \label{final2}
  nC\delta^{1/2} 2\mathfrak M_\sigma(0) &\le AT^2.
\end{align}
From $T=n\delta$ and \eqref{TimeStep} we get $n\delta^{1/2} = T \delta^{-1/2} = T c_2 A^{1/2} T$, where $c_2$ depends on  $M$. Therefore \eqref{final1} and \eqref{final2} reduce to
\begin{align}
  \label{final3}
  C\sigma c_2 A^{1/2} T^2 2(4AT^2) &\le 1,
  \\
  \label{final4}
  C c_2 A^{1/2} T^2 2\mathfrak M_\sigma(0) &\le AT^2.
\end{align}
To satisfy the latter we choose $A$ so large that
\[
 Cc_2 2\mathfrak M_\sigma(0) \le A^{1/2}.
\]
Finally, \eqref{final3} is satisfied if we take $\sigma$ equal to the right-hand side of \eqref{Goal}. This concludes the proof of our main result, Theorem \ref{thm2}, in the case $m > 0$.

\section{The case $m=0$}

Set $m=0$. The conclusion in Theorem \ref{FirstStep} remains true. To see this, we add $\phi$ to each side of the second equation in \eqref{DKG}, hence the last two equations in \eqref{DKGsplit} are replaced by
\[
  \left( D_t \pm \angles{D_x} \right) \phi_\pm = \mp \angles{D_x}^{-1} \left( \phi + \re\left(\overline{\psi_+} \psi_- \right)\right),
\]
where as before $\phi = \phi_+ + \phi_-$ with $\phi_\pm = \frac12 \left( \phi \pm i \angles{D_x}^{-1} \partial_t \phi \right)$. Then the proof of Theorem~\ref{FirstStep} goes through with some obvious changes: in \eqref{IterationEstimate2} there appears an extra term $\delta B_n(\delta)$ on the right-hand side, and in \eqref{IterationEstimate4} a term $\delta \mathfrak B_n(\delta)$.

Next we consider the changes that need to be made in the argument from Section \ref{SecondStepSection}. Since Theorem \ref{FirstStep} remains unchanged, then so does \eqref{Time}--\eqref{IterationBound2}. Now observe that the norm
\[
  \mathfrak N_\sigma(t) = \norm{\phi_+(t)}_{G^{\sigma,1}} + \norm{\phi_-(t)}_{G^{\sigma,1}}
\]
is equivalent to
\[
  \norm{\angles{D_x}\phi(t)}_{G^{\sigma,0}} + \norm{\partial_t \phi(t)}_{G^{\sigma,0}},
\]
which in turn is equivalent to
\[
  \norm{\phi(t)}_{L^2} + \norm{\abs{D_x}\phi(t)}_{G^{\sigma,0}} + \norm{\partial_t \phi(t)}_{G^{\sigma,0}}.
\]
But if we write $\phi = \Phi_+ + \Phi_-$ with $\Phi_\pm = \frac12 \left( \phi \pm i \abs{D_x}^{-1} \partial_t \phi \right)$, then the norm
\[
  \norm{\abs{D_x}\phi(t)}_{G^{\sigma,0}} + \norm{\partial_t \phi(t)}_{G^{\sigma,0}}
\]
is equivalent to
\[
  \mathfrak N_\sigma'(t) = \norm{\abs{D_x}\Phi_+(t)}_{G^{\sigma,0}} + \norm{\abs{D_x}\Phi_-(t)}_{G^{\sigma,0}}.
\]
Thus we have the norm equivalence
\begin{equation}\label{NormEquivalence}
  \mathfrak N_\sigma(t) \sim \norm{\phi(t)}_{L^2} + \mathfrak N_\sigma'(t),
\end{equation}
where the implicit constants are independent of $t$, of course. In particular, this means that \eqref{Time} can be replaced by
\begin{equation}\label{NewTime}
  \delta = \frac{c_0}{1 + \mathfrak M_{\sigma}(0) + \norm{\phi(0)}_{L^2} + \mathfrak N_\sigma'(0)}.
\end{equation}

The first term on the right-hand side of \eqref{NormEquivalence} is a priori under control. Indeed, from the energy inequality for the wave operator $\square = -\partial_t^2+\partial_x^2$ and by conservation of charge we have
\begin{equation}\label{Growth}
  \norm{\phi(t)}_{L^2} = O(t^2)
\end{equation}
as $t \to \infty$, where the implicit constant depends on $\norm{(\phi_0,\phi_1)}_{L^2 \times H^{-1}} + \norm{\psi_0}_{L^2}^2$.

Noting that
\[
  \left( D_t \pm \abs{D_x} \right) \Phi_\pm = \mp \abs{D_x}^{-1} \re\left(\overline{\psi_+} \psi_- \right),
\]
the argument used to prove Theorem \ref{SecondStep} now gives
\begin{align*}
  \sup_{t \in [0,\delta]} \mathfrak M_\sigma(t)
  &\le
  \mathfrak M_\sigma(0) + C\sigma \delta^{1/2} \mathfrak M_\sigma(0) \left( \mathfrak M_\sigma(0)^{1/2} + \norm{\phi(0)}_{L^2} + \mathfrak N_\sigma'(0) \right),
  \\
  \sup_{t \in [0,\delta]} \mathfrak N_\sigma'(t)
  &\le
  \mathfrak N_\sigma'(0) + C \delta^{1/2} \mathfrak M_\sigma(0).
\end{align*}
Using these as well as \eqref{NewTime} and \eqref{Growth}, the argument from the previous section goes through to show that $\sigma(t) \ge c/t^4$ as $t \to \infty$. This concludes the case $m=0$.

\section*{Acknowledgements} The authors are indebted to Hartmut Pecher and to an anonymous referee for helpful comments on an earlier version of this article. Sigmund Selberg was supported by the Research Council of Norway, grant no.~213474/F20. Achenef Tesfahun acknowledges support from the German Research Foundation, Collaborative Research Center 701.

\bibliographystyle{amsplain} 
\bibliography{DKGbibliography}

\end{document}